\tikzstyle{shaded}=[fill=red!10!blue!20!gray!30!white]
\tikzstyle{shaded line}=[double=red!10!blue!20!gray!30!white, double distance=1.5mm, draw=black]
\tikzstyle{unshaded}=[fill=white]
\tikzstyle{unshaded line}=[double=white, double distance=1.5mm, draw=black]
\tikzstyle{Tbox}=[circle, draw, thick, fill=white, opaque,]
\tikzstyle{empty box}=[circle, draw, thick, fill=white, opaque, inner sep=2mm]
\tikzstyle{background rectangle}= [fill=red!10!blue!20!gray!40!white,rounded corners=2mm] 
\tikzstyle{on}=[very thick, red!50!blue!50!black]
\tikzstyle{off}=[gray]
\tikzstyle{traces}=[scale=.2, inner sep=1mm]
\tikzstyle{quadratic}=[scale=.4, inner sep=1mm, baseline]
\tikzstyle{annular}=[scale=.7, inner sep=1mm, baseline]
\tikzstyle{make triple edge size}= [scale=.4, inner sep=1mm,baseline] 
\tikzstyle{icosahedron network}=[scale=.3, inner sep=1mm, baseline]
\tikzstyle{ATLsix}=[scale=.25, baseline]
\tikzstyle{TL12}=[scale=.15,baseline]
\tikzstyle{PAdefn}=[scale=.7,baseline]
\tikzstyle{TLEG}=[scale=.5,baseline]
\newtheorem{lemma}{Lemma}[section]
\newtheorem{definition*}{Definition}
\newtheorem{theorem}[lemma]{Theorem}
\newtheorem{question}[lemma]{Question}
 \title{The Boolean intervals of Chevalley type are strongly non group-complemented}
\author{Sebastien Palcoux and Pablo Spiga}
\address{Yau Mathematical Sciences Center, Tsinghua University, Beijing, China}
\email{sebastien.palcoux@gmail.com}
\address{Dipartimento di Matematica Pura e Applicata, University of Milano-Bicocca, Milano, Italy}
\email{pablo.spiga@unimib.it} 
\subjclass[2010]{05E15, 06A11, 06C15, 20D06 (Primary)}
\keywords{group; lattice; Boolean; Lie group; Chevalley group; Borel subgroup; BN-pair}
\begin{document}

\maketitle

\begin{abstract}
Let $G$ be a finite Chevalley group and $B$ a Borel subgroup. Then the interval $[B,G]$ in $\mathcal{L}(G)$ is Boolean. We prove, using Zsigmondy's theorem, that for any element $P$ in the open interval $(B,G)$, its lattice-complement $P^{\complement}$ is not a group-complement.
\end{abstract}

\section{Preliminaries} \label{pre}
A bounded lattice $(L, \vee, \wedge)$ is called \textit{complemented} if any element $a \in L$ admits a \textit{complement} $b \in L$ (i.e. $a \vee b = \hat{1}$ and $a \wedge b = \hat{0}$).  An interval of groups $[B,G]$ in $\mathcal{L}(G)$ will be called \textit{lattice-complemented} if it is complemented as a lattice. It will be called \textit{group-complemented} if it is lattice-complemented and if for any $P \in [B,G]$ there is a lattice-complement $Q$ which is a \textit{group-complement}, i.e. $PQ = QP$ (so that $PQ = G$).  The interval $[B,G]$ is called \textit{strongly non group-complemented} if for any $P$ in the open interval $(B,G)$, there is no (lattice-complement which is a) group-complement.

We refer to \cite{car} for the notions of Dynkin diagram, Chevalley group, Borel subgroup, BN-pair, parabolic subgroup, unipotent subgroup.  We will use the classification of the finite (connected) Dynkin diagrams and the order of the corresponding Chevalley groups over the finite field $\mathbb{F}_q$ with $q$ a prime power. Let us recall them:  
$$A_n \dynkin[mark=o, edge length=0.6cm]{A}{} \hspace{0.7cm} F_4 \dynkin[mark=o, edge length=0.6cm]{F}{4} \hspace{0.7cm} G_2 \dynkin[mark=o, edge length=0.6cm]{G}{2}$$
$$B_n \dynkin[mark=o, edge length=0.6cm]{B}{} \hspace{1.435cm} E_6 \dynkin[mark=o, edge length=0.6cm]{E}{6}$$
$$C_n \dynkin[mark=o, edge length=0.6cm]{C}{} \hspace{0.855cm} E_7 \dynkin[mark=o, edge length=0.6cm]{E}{7}$$  
$$D_n \dynkin[mark=o, edge length=0.6cm]{D}{} \hspace{0.514cm} E_8 \dynkin[mark=o, edge length=0.6cm]{E}{8}$$

\footnotesize  

$$\begin{array}{|l|l|l|l|l|}
\hline
G & A_n(q), n \ge 1 & B_n(q), n>1 & C_n(q), n>2 & D_n(q), n>3   \\
  \hline
|G| & \frac{q^{\frac{1}{2} n(n + 1)}}{(n + 1, q - 1)} \prod_I\left(q^i - 1\right)  & \frac{q^{n^2}}{(2, q - 1)} \prod_I\left(q^i - 1\right) & \frac{q^{n^2}}{(2, q - 1)} \prod_I\left(q^i - 1\right)  & \frac{q^{n(n-1)}}{(4, q^n-1)} \prod_I\left(q^i - 1\right) \\
  \hline 
I & \{2,3, \dots , n+1\} & \{2,4, \dots , 2n\} & \{2,4, \dots , 2n\} & \{n\} \cup \{2,4, \dots , 2n-2\} \\
\hline
\end{array} $$

$$\begin{array}{|l|l|l|l|l|l|}
\hline
G & E_6(q) & E_7(q)& E_8(q) & F_4(q) & G_2(q)  \\
  \hline  
|G| & \frac{q^{36}}{(3,q-1)}\prod_I(q^i-1) & \frac{q^{63}}{(2,q-1)}\prod_I(q^i-1) & q^{120}\prod_I(q^i-1) & q^{24} \prod_I(q^i-1) & q^{6}\prod_I(q^i-1)  \\
  \hline 
I & \{2,5,6,8,9,12\} & \{2,6,8,10,12,14,18\} & \{2,8,12,14,18,20,24,30\} & \{2,6,8,12\} &  \{2,6\} \\
\hline
 \end{array}$$
\normalsize

\noindent Let $G$ be a group with a BN-pair of rank $n$. Then the interval $[B,G]$ is Boolean of rank $n$ \cite[Theorems 8.3.2 and 8.3.4]{car}. A Chevalley group $G$ admits a BN-pair \cite[Proposition 8.2.1]{car} of rank the rank of $G$, i.e. the number $n$ of vertices in its Dynkin diagram (denoted) $X_n$. Let $B$ be a Borel subgroup. Then let call $[B,G]$ a \textit{Boolean interval of Chevalley type}.    

Let $V$ be the set of vertices of $X_n$. Let $W$ be a subset of $V$ and let $X_W$ be the sub-diagram of $X_n$ whose vertices are $W$ and whose edges are given by $X_n$ (so that $X_V = X_n$).  Note that $X_W$ decomposes into connected components which are connected Dynkin diagrams.

\begin{theorem} \label{levi} The lattice-isomorphism between the power-set $\mathcal{P}(V)$ and the interval $[B,G]$ realizes as a map $W \mapsto P_W$ such that $P_W = U_W \rtimes L_W$ (Levi Decomposition) where $U_W$ (called the unipotent subgroup) decomposes into a product of \emph{root subgroups} (each of them being isomorphic to the additive group of the field $\mathbb{K}$) where each element admits a unique decomposition, and $L_W$ (called a Levi subgroup) is isomorphic to the direct product of the Chevalley groups (over $\mathbb{K}$) corresponding to the connected components of $X_W$. 
\end{theorem}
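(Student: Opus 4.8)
The plan is to establish the lattice-isomorphism $W \mapsto P_W$ together with the asserted Levi decomposition by leveraging the standard BN-pair structure theory, so that most of the heavy lifting is quoted from Carter's book \cite{car} and only the explicit identification of the pieces with root subgroups and component Chevalley groups needs to be assembled. Let me think about which ingredients are available and how they fit together.

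Let me reconsider what exactly needs proving. We already know from the cited results that $[B,G]$ is Boolean of rank $n$, and abstractly Boolean lattices of rank $n$ are isomorphic to $\mathcal{P}(V)$ with $|V| = n$. So the lattice isomorphism itself is essentially given; the content of Theorem \ref{levi} is the explicit realization and the internal structure of each $P_W$.

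First I would recall the standard parametrization of the subgroups of $[B,G]$. Given a BN-pair with Weyl group $W$ generated by the fundamental reflections $\{s_v : v \in V\}$ indexed by the vertices, every subgroup containing $B$ is a \emph{standard parabolic} $P_W = B W_W B$, where $W_W = \langle s_v : v \in W\rangle$ is the parabolic subgroup of the Weyl group generated by the reflections indexed by $W \subseteq V$; this is the parabolic subgroup theorem for BN-pairs. The assignment $W \mapsto P_W$ is the desired map, and its bijectivity and order-preservation (hence the lattice isomorphism) follow from the bijection between subsets of $V$ and standard parabolics. I would cite the relevant theorem in \cite{car} for this correspondence and note that $P_\varnothing = B$ and $P_V = G$ match $\hat 0$ and $\hat 1$.

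Next I would invoke the Levi decomposition for parabolic subgroups of a group with a split BN-pair of Chevalley type. For a Chevalley group, each standard parabolic $P_W$ has a Levi decomposition $P_W = U_W \rtimes L_W$ where $U_W$ is the unipotent radical and $L_W$ is a Levi complement. To get the two structural claims, I would use the description of these subgroups in terms of the root system $\Phi$ with simple roots indexed by $V$. Writing $\Phi_W$ for the root subsystem spanned by the simple roots in $W$, the Levi subgroup $L_W$ is generated by the maximal torus together with the root subgroups $X_\alpha$ for $\alpha \in \Phi_W$, while $U_W$ is the product of the root subgroups $X_\alpha$ over the positive roots $\alpha \in \Phi^+ \setminus \Phi_W$. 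Each root subgroup $X_\alpha \cong (\mathbb{K},+)$ by the defining properties of a Chevalley group, giving the first claim, and the uniqueness of the decomposition of elements of $U_W$ into root-subgroup factors is the standard normal-form result (fixing an ordering of the roots). For the structure of $L_W$, the key point is that $\Phi_W$ decomposes as an orthogonal direct sum of the irreducible root subsystems corresponding to the connected components of the sub-diagram $X_W$; since the simple roots attached to distinct components are mutually orthogonal and span independent subsystems, the subgroups generated by each component commute and intersect trivially modulo the torus, yielding $L_W \cong \prod_c X_c(\mathbb{K})$ over the connected components $c$ of $X_W$.

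The main obstacle, and the step deserving the most care, will be the last identification: showing that $L_W$ is genuinely a \emph{direct product} of the component Chevalley groups rather than merely generated by commuting pieces, which requires controlling the torus and the fundamental group factors (the $(n+1,q-1)$ and similar gcd terms appearing in the order formulas). I would handle this by appealing to the general theory of Levi subgroups as Chevalley groups associated to the root datum $\Phi_W$, whose Dynkin diagram is exactly $X_W$; since a Dynkin diagram that splits into connected components corresponds to a root system that splits as an orthogonal direct sum, and the associated group is the direct product of the groups of the components, the claim follows from the functoriality of the Chevalley construction. The remaining routine verification is that the root subgroups in $U_W$ indexed by $\Phi^+ \setminus \Phi_W$ are permuted among themselves by $L_W$ (so the product really is a semidirect product $U_W \rtimes L_W$), which is immediate from the root subgroup commutator relations. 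I would keep these verifications brief and cite \cite[Chapter 8]{car} for the underlying structural facts.
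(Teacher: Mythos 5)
Your proposal is correct and takes essentially the same route as the paper: the paper's entire proof of Theorem \ref{levi} is a citation of Carter's structure theory (Section 8.5 for the Levi decomposition, Theorem 5.3.3(ii) and pages 68 and 119 for the unipotent and Levi subgroups), and your argument simply unpacks those same results — the standard parabolic correspondence $W \mapsto B W_W B$, the root-subgroup normal form in $U_W$, and the splitting of $\Phi_W$ along the connected components of $X_W$. The one caveat is the subtlety you yourself flag: Carter's Levi subgroup $L_W$ contains the full maximal torus $H$ of $G$, so it can exceed the direct product of the component Chevalley groups by a toral factor; this imprecision is already present in the paper's own statement (and its proof-by-citation), and it is harmless for the main theorem because such factors divide a power of $q-1$ and are therefore coprime to any primitive prime divisor of $q^r-1$ with $r>2$.
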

\begin{proof}
For the Levi Decomposition, see \cite[Section 8.5]{car}. About the unipotent subgroup, see \cite[pages 68 and 119, and Theorem 5.3.3 (ii)]{car}. Finally, the last assertion about $L_W$ follows from its definition \cite[page 119]{car}.
\end{proof}

\section{Result}

\begin{lemma} \label{subb}
The connected sub-diagrams of the Dynkin diagrams are classified as follows:
\footnotesize    
$$\begin{array}{|l|l|l|l|l|}
\hline
\text{Dynkin diagram} & A_n \ (n \ge 1) & B_n \ (n>1) & C_n \ (n>2) & D_n \ (n>3)   \\
  \hline
\text{Connected} & A_m \ (m \le n) & A_m \ (m < n), & A_m \ (m < n), B_2,  & A_m \ (m < n),  \\

\text{sub-diagrams}           &              &  B_m \ (1 < m \le n) & C_m \ (2 < m \le n)               & D_m \ (3 < m \le n) \\
\hline                            
\end{array} $$

$$\begin{array}{|l|l|l|l|}
\hline
\text{Dynkin diagram} & E_n \ (6 \le n \le 8) & F_4 & G_2   \\
  \hline
\text{Connected} & A_m \ (m < n), D_{n-1}, & A_1,A_2, B_2,  & A_1,  \\

\text{sub-diagrams}                &  E_m \ (6 \le m \le n)  & B_3,C_3, F_4  & G_2  \\
\hline                            
\end{array} $$
\normalsize
\end{lemma}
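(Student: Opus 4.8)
The plan is to prove the classification by a direct finite inspection, after making one structural observation: a connected sub-diagram is precisely the induced sub-diagram $X_W$ on a connected vertex subset $W \subseteq V$, and it inherits from $X_n$ both the multiplicities of its edges (single, double, triple) and their orientations. Since there are finitely many diagrams $X_n$ and, for each one, only finitely many connected subsets $W$, the statement reduces to reading off the isomorphism type of $X_W$ in every case. I would organise this reading by the \emph{shape} of $X_W$, using three invariants: the multiset of vertex degrees (to detect a trivalent branch node), the edge multiplicities, and, for a multiple edge, its orientation.

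For the linear diagrams $A_n$, $B_n$, $C_n$ and for $G_2$, every connected $W$ is a sub-path, because the underlying graph is a path. In $A_n$ all edges are single, so each sub-path is of type $A_m$ with $m \le n$. In $B_n$ (respectively $C_n$) the only multiple edge sits at one end: a sub-path missing it has all single edges and is of type $A_m$ with $m < n$, while a sub-path containing it is of type $B_m$ (respectively $C_m$), the orientation being inherited from $X_n$. Here I would record the coincidence $B_2 \cong C_2$, the unique rank-two diagram with a double edge; this is exactly why the rank-two piece occurring in $C_n$ is listed as $B_2$ while the higher ranks remain $C_m$ with $m > 2$. The diagram $G_2$ is immediate, giving $A_1$ and $G_2$ itself.

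For $D_n$ I would single out the unique trivalent vertex $v$, adjacent to the two short fork vertices and to the rest of the chain. A connected $W$ either does not make $v$ trivalent, in which case $X_W$ is a path and hence of type $A_m$ with $m < n$ (the bound holding because the whole diagram is not a path), or it contains $v$ together with all three of its neighbours, in which case $X_W$ is a fork with two arms of length one and one longer arm, i.e. of type $D_m$. Using the coincidence $D_3 \cong A_3$ to discard the degenerate fork, the genuine type-$D$ sub-diagrams are exactly $D_m$ with $3 < m \le n$.

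The exceptional diagrams $E_6, E_7, E_8$ and $F_4$ I would treat by explicit enumeration of their connected subsets, which is where the real bookkeeping lies and which I expect to be the main, though entirely mechanical, obstacle. For $F_4$ the subtle point is that the central double edge can be approached from either side: the length-two sub-path across it is $B_2$, while the two length-three sub-paths containing it have opposite orientations and therefore give $B_3$ and $C_3$ respectively, the full diagram being $F_4$ and the remaining sub-paths $A_1$ and $A_2$. For $E_n$ I would again locate the trivalent node and classify a connected $W$ by whether $X_W$ is a path (type $A_m$, $m < n$), a fork with arm lengths $(1,1,k)$ (type $D$, the largest occurring being $D_{n-1}$), or a fork with arm lengths $(1,2,k)$ (type $E_m$, $6 \le m \le n$, with $E_6 \subseteq E_7 \subseteq E_8$ recovered by deleting terminal vertices). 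Assembling these readings yields the table; the only points requiring genuine care are the edge orientations separating $B$ from $C$ and the small-rank coincidences $B_2 \cong C_2$ and $A_3 \cong D_3$.
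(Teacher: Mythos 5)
Your proof is correct and takes essentially the same route as the paper, whose entire proof is the one-line ``a checking watching the Dynkin diagrams'' of Section \ref{pre}; you have simply written out that inspection explicitly, organized by degree sequence, edge multiplicity, and edge orientation. One remark: your analysis of $E_n$ (forks with arm lengths $(1,1,k)$, giving all of $D_4,\dots,D_{n-1}$) is in fact more accurate than the lemma's table, which lists only $D_{n-1}$ --- a harmless omission for the paper's purposes, since only the maximal rank of a type-$D$ sub-diagram matters in the application to Lemma \ref{sub}.
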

\begin{proof} A checking watching the Dynkin diagrams in Section \ref{pre}.
\end{proof}

Let $X_n$ be a (connected) Dynkin diagram and let $I(X_n)$ denote the set $I$ associated to $X_n(q)$ in the tables of Section \ref{pre}.  

\begin{lemma} \label{sub}
Let $Y_m$ be a connected sub-diagram of $X_n$ with $m<n$. Then $$\max(I(Y_m))<\max(I(X_n)).$$  
\end{lemma}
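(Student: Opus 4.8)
The plan is to prove Lemma \ref{sub} by direct inspection, exploiting the classification of connected sub-diagrams already established in Lemma \ref{subb} together with the explicit values of $\max(I(X_n))$ that can be read off from the tables in Section \ref{pre}. First I would tabulate $\max(I(X_n))$ for every connected Dynkin diagram: from the tables one reads $\max(I(A_n))=n+1$, $\max(I(B_n))=\max(I(C_n))=2n$, $\max(I(D_n))=2n-2$ (since $n \le 2n-2$ for $n>3$, the even entry dominates), and for the exceptional types $\max(I(E_6))=12$, $\max(I(E_7))=18$, $\max(I(E_8))=30$, $\max(I(F_4))=12$, $\max(I(G_2))=6$. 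Having these numbers in hand reduces the lemma to a finite check: for each ambient diagram $X_n$ and each of its proper connected sub-diagrams $Y_m$ listed in Lemma \ref{subb}, verify that the corresponding maximum strictly decreases.

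The key steps then proceed type by type. For the classical families the inequalities are monotonicity statements in the rank. If $X_n = A_n$, every proper connected sub-diagram is $A_m$ with $m<n$, and $\max(I(A_m))=m+1<n+1=\max(I(A_n))$. If $X_n = B_n$ or $C_n$, the proper connected sub-diagrams are $A_m$ (with $\max=m+1 \le n < 2n$) or $B_m/C_m$ of strictly smaller rank (with $\max=2m<2n$), and both fall below $2n$. For $D_n$ the sub-diagrams are $A_m$ ($m<n$, so $\max=m+1\le n\le 2n-2$, with the needed strictness holding since $n>3$) or $D_m$ with $m<n$ (so $\max=2m-2<2n-2$). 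For the exceptional types the check is entirely against the finite list in Lemma \ref{subb}: for instance every proper connected sub-diagram of $E_8$ is one of $A_m\ (m<8)$, $D_7$, $E_6$, or $E_7$, with maxima $m+1\le 8$, $12$, $12$, $18$ respectively, all strictly below $30$; and one dispatches $E_6$, $E_7$, $F_4$, $G_2$ analogously by comparing against their respective maxima $12$, $18$, $12$, $6$.

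I do not anticipate a genuine obstacle here, since the statement is a finite verification once the $\max(I(\cdot))$ values are recorded; the only subtlety worth flagging is the type $D_n$ case, where $\max(I(X_n))$ comes from the even part $2n-2$ rather than the isolated entry $n$, so one must confirm $n \le 2n-2$ (equivalently $n\ge 2$, amply satisfied) before asserting that $2n-2$ is the maximum, and must likewise verify that an $A_m$ sub-diagram of $D_n$ cannot accidentally tie the maximum. The care needed is therefore bookkeeping rather than ideas, and I would present the proof as a short case analysis organized by the ambient type, citing Lemma \ref{subb} for the enumeration of sub-diagrams and the Section \ref{pre} tables for the values of $\max(I(\cdot))$.
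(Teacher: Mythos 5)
Your proposal is correct and is exactly the paper's argument: the paper's proof of Lemma \ref{sub} is literally ``a checking using all the previous tables,'' and you have simply carried out that finite verification explicitly, with the right values $\max(I(A_n))=n+1$, $\max(I(B_n))=\max(I(C_n))=2n$, $\max(I(D_n))=2n-2$, and $12,18,30,12,6$ for $E_6,E_7,E_8,F_4,G_2$. Your extra care about the $D_n$ case (checking $n\le 2n-2$) is a worthwhile detail the paper leaves implicit, but it does not change the approach.
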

\begin{proof} A checking using all the previous tables.
\end{proof}

\begin{theorem} \label{zig} Let $a>1$ and $r>2$ be integers. Then there is a prime number $p$ (called \emph{primitive prime divisor}) that divides $a^r - 1$ and does not divide $a^k - 1$ for any positive integer $k < r$, except for $a=2$ and $r=6$ where $2^6-1 = (2^2-1)^2(2^3-1)$.
\end{theorem}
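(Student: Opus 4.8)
The strategy is to work with the cyclotomic polynomials $\Phi_d(x) \in \mathbb{Z}[x]$, which satisfy the factorization $a^r - 1 = \prod_{d \mid r} \Phi_d(a)$, and to isolate the factor $\Phi_r(a)$ as the natural home of any primitive prime divisor. Writing $\operatorname{ord}_p(a)$ for the multiplicative order of $a$ modulo a prime $p \nmid a$, I would first record the elementary equivalence that $p$ is a primitive prime divisor of $a^r - 1$ precisely when $\operatorname{ord}_p(a) = r$. Since $p \mid a^d - 1$ if and only if $\operatorname{ord}_p(a) \mid d$, every prime factor $p$ of $\Phi_r(a)$ satisfies $\operatorname{ord}_p(a) \mid r$, and the task becomes to produce one such $p$ with order exactly $r$.

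The heart of the argument is the classical order lemma: if $p \mid \Phi_r(a)$ but $e := \operatorname{ord}_p(a) < r$, then $r = e\,p^{s}$ for some $s \ge 1$ (so in particular $p \mid r$), and moreover $v_p(\Phi_r(a)) = 1$. I would prove the first part by comparing the $p$-adic valuations of the two sides of $a^r - 1 = \prod_{d \mid r}\Phi_d(a)$, using that $a$ has order $e$ modulo $p$ together with a lifting-the-exponent computation of $v_p(a^r - 1)$; the exact-power statement falls out of the same valuation bookkeeping. Because $e = \operatorname{ord}_p(a)$ divides $p - 1$, every prime dividing $e$ is strictly smaller than $p$, so from $r = e\,p^{s}$ one sees that $p$ is the largest prime factor of $r$. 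Consequently $\Phi_r(a) = p^{\delta} m$ with $\delta \in \{0,1\}$, with $p$ the largest prime factor of $r$, and with $m$ a product of primitive prime divisors; hence $a^r - 1$ fails to have a primitive prime divisor if and only if $\Phi_r(a) \in \{1,\, p_{\max}(r)\}$, where $p_{\max}(r)$ denotes the largest prime factor of $r$.

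It then remains to establish the size estimate $\Phi_r(a) > p_{\max}(r)$ for all $a > 1$ and $r > 2$ except $(a,r) = (2,6)$. Writing $\Phi_r(a) = \prod_{\gcd(k,r)=1}(a - \zeta_r^{k})$ over the primitive $r$-th roots of unity and using $|a - \zeta_r^{k}| > a - 1$ (strict, since $\zeta_r^{k} \ne 1$ for $r > 1$) gives the crude bound $\Phi_r(a) > (a-1)^{\varphi(r)}$, which together with $p_{\max}(r) \le r$ and the growth of $\varphi(r)$ settles every case with $a \ge 3$ after checking a short list of small $r$ by hand. The genuinely delicate range is $a = 2$, where this bound degenerates; here I would instead use the sharper lower bound obtained by retaining the conjugate-pair factors $|2 - \zeta_r|^2 = 5 - 4\cos(2\pi/r)$, which forces $\Phi_r(2)$ to exceed $p_{\max}(r)$ once $r$ is large enough, and then verify the finitely many remaining small $r$ directly. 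Exactly one of these small cases, $\Phi_6(2) = 3 = p_{\max}(6)$, violates the inequality, accounting precisely for the stated exception $a = 2$, $r = 6$. I expect this final size estimate for $a = 2$ to be the main obstacle, since it is where the exceptional case is born and where the otherwise clean cyclotomic bounds must be pushed to their sharp form.
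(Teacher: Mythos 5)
Your situation differs from the usual one: the paper does not prove this statement at all --- its ``proof'' is a one-line citation of Zsigmondy's 1892 paper. So you are supplying an argument the paper deliberately outsources, and the route you chose is the standard self-contained one: the factorization $a^r-1=\prod_{d\mid r}\Phi_d(a)$, the order lemma (if $p\mid \Phi_r(a)$ and $e:=\operatorname{ord}_p(a)<r$ then $r=ep^s$, $p$ is the largest prime factor of $r$, and $v_p(\Phi_r(a))=1$), and the reduction to the inequality $\Phi_r(a)>p_{\max}(r)$. That skeleton is correct, and your treatment of $a\ge 3$ works (with the small-$r$ checks you mention, noting $p_{\max}(6)=3<4$).

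However, there is a genuine gap at exactly the step you flag as the main obstacle, and as proposed it does not close. For $a=2$ you invoke the nearest conjugate pair, i.e. the bound $\Phi_r(2)\ge\bigl(5-4\cos(2\pi/r)\bigr)^{\varphi(r)/2}$. Since $5-4\cos(2\pi/r)=1+O(1/r^2)$, this quantity is at most $\exp\bigl(4\pi^2\varphi(r)/r^2\bigr)\le\exp(4\pi^2/r)\to 1$, so far from ``forcing $\Phi_r(2)>p_{\max}(r)$ once $r$ is large enough,'' it degenerates for large $r$ just as badly as the crude bound $(a-1)^{\varphi(r)}$ it was meant to replace; and if instead you mean to retain \emph{all} pair factors $5-4\cos(2\pi k/r)$, you must show that many $k$ coprime to $r$ stay bounded away from $0\bmod r$, an inclusion--exclusion/equidistribution count you do not supply. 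The standard repair stays with the M\"obius form: $\Phi_r(2)=\prod_{d\mid r}(2^d-1)^{\mu(r/d)}\ge 2^{\varphi(r)}\prod_{j\ge1}(1-2^{-j})>2^{\varphi(r)-2}$. Writing $p:=p_{\max}(r)$, your own lemma gives $p\mid r$, hence $\varphi(r)\ge p-1$ and $\Phi_r(2)>2^{p-3}\ge p$ for all $p\ge 7$; for $p\in\{2,3,5\}$ the lemma's constraint $r=\operatorname{ord}_p(2)\,p^s$ leaves only the families $r=2^s$, $r=2\cdot 3^s$, $r=4\cdot 5^s$, where the same bound disposes of all but $r=4$ and $r=6$, and direct evaluation ($\Phi_4(2)=5$, $\Phi_6(2)=3$) isolates the unique exception $(a,r)=(2,6)$. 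Two minor points: your ``if and only if'' characterization of failure is really only the necessity direction (if $\Phi_r(a)=p_{\max}(r)$ and that prime has order exactly $r$, it \emph{is} primitive), which is all you use; and the claim $v_p(\Phi_r(a))=1$ needs the separate easy case $p=2$, $r=2^s$ with $s\ge 2$, where the lifting-the-exponent identity you invoke takes a different form.
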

\begin{proof} It is a particular case of Zsigmondy's theorem \cite{zi}.
\end{proof}

\begin{theorem} \label{main}
Let $G$ be a finite Chevalley group over the finite field $\mathbb{F}_q$ (with $q$ a prime power) and let $B$ be a Borel subgroup. Let $P$ be an element in the open interval $(B,G)$. Then its lattice-complement $P^{\complement}$ is not a group-complement.
\end{theorem}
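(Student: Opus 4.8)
The plan is to convert the group-theoretic statement into a numerical one about orders, and then to produce a prime that distinguishes two products of terms $q^i-1$ by invoking Theorem~\ref{zig}. Let $W$ with $\emptyset\neq W\subsetneq V$ be the subset of vertices with $P=P_W$ under Theorem~\ref{levi}. Since $[B,G]\cong\mathcal P(V)$ is Boolean, the lattice-complement of $P$ is the unique subgroup $P^{\complement}=P_{V\setminus W}$, and the meet $P\wedge P^{\complement}=P_{\emptyset}=B$ is the intersection, so $P\cap P^{\complement}=B$. Consequently $|P\cdot P^{\complement}|=|P|\,|P^{\complement}|/|B|$; as $P\cdot P^{\complement}\subseteq G$, a group-complement would satisfy $P\cdot P^{\complement}=G$ and hence $|P|\,|P^{\complement}|=|B|\,|G|$. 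It therefore suffices to prove that this equality fails, i.e. (after dividing by $|B|^2$) that
\[
[P_W:B]\,[P_{V\setminus W}:B]\neq[G:B].
\]
Working with indices is what makes the computation clean: the centre/isogeny denominators $d$ of the order tables in Section~\ref{pre} cancel and never intervene.

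Next I would evaluate these indices through the Bruhat decomposition. For a standard parabolic $P_W$ the index $[P_W:B]$ is the Poincaré polynomial $\sum_{w} q^{\ell(w)}$ of the parabolic subgroup $W_W\leq W(X_n)$, which factors over the connected components of $X_W$; since the elements of $I(Y)$ are exactly the degrees of $W(Y)$ this gives
\[
[G:B]=\prod_{i\in I(X_n)}\frac{q^i-1}{q-1},\qquad [P_W:B]=\prod_{i\in I(X_W)}\frac{q^i-1}{q-1},
\]
where $I(X_W)$ denotes the multiset union of the sets $I(X_c)$ over the connected components $X_c$ of $X_W$ (these being the connected sub-diagrams listed in Lemma~\ref{subb}). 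Because a rank-$m$ diagram contributes $|I(Y_m)|=m$ factors, the numbers of $(q-1)$-denominators on both sides of the displayed criterion agree ($|W|+|V\setminus W|=n=|I(X_n)|$) and cancel. The statement to refute becomes the equality of two $n$-term products
\[
\prod_{i\in M}(q^i-1)=\prod_{i\in I(X_n)}(q^i-1),\qquad M:=I(X_W)\sqcup I(X_{V\setminus W}).
\]

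The core of the argument is then Zsigmondy's theorem. Because $W$ and $V\setminus W$ are both proper, every connected component of $X_W$ and of $X_{V\setminus W}$ has rank strictly less than $n$, so by Lemma~\ref{sub} every exponent occurring in $M$ is strictly smaller than $r:=\max I(X_n)$, while the right-hand product retains the factor $q^r-1$. From the tables every diagram of rank $\geq 2$ has $r\geq 3$, and $q\geq 2$; thus outside the exceptional case of Theorem~\ref{zig} there is a prime $p$ dividing $q^r-1$ but no $q^i-1$ with $i<r$. Such a $p$ divides the right-hand product and no factor of the left-hand one, so the two products differ. (Rank-$1$ diagrams are vacuous, $(B,G)$ being empty.)

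The single real obstacle is the exceptional case $q=2$, $r=6$, where $2^6-1=3^2\cdot 7$ carries no primitive prime divisor; this arises only for the finitely many diagrams with $\max I(X_n)=6$, namely $A_5,B_3,C_3,D_4,G_2$. Here I would replace the single prime by $3$-adic and $7$-adic valuations, using $v_3(2^i-1)=1+v_3(i)$ for even $i$ (and $0$ for odd $i$) and $v_7(2^i-1)=1$ exactly when $3\mid i$ (since $21\nmid i$ throughout). For $B_3,C_3,D_4,G_2$ all exponents of $I(X_n)$ are even and only the exponent $6$ is divisible by $3$, so the right-hand valuation is $v_3=n+1$, whereas $v_3$ of the left-hand product is the number of even exponents of $M$ (all equal to $2$ or $4$), at most $|M|=n$; the valuations differ. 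For $A_5$ the $3$-adic valuations can both equal $4$, and there I would fall back on $v_7$: the right-hand product has $v_7=2$ (from the exponents $3$ and $6$), while a short analysis of the partitions of the five nodes shows that whenever $v_3$ matches the splitting $X_W\sqcup X_{V\setminus W}$ has exactly one component of rank $\geq 2$, forcing $v_7=1$. Establishing this incompatibility of the two valuations for $A_5$ is the most delicate bookkeeping in the proof.
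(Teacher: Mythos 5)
Your proposal is correct, and its engine is the same as the paper's: reduce the existence of a group-complement to an equality between products of terms $q^i-1$, apply Theorem~\ref{zig} to $r=\max I(X_n)$ together with Lemma~\ref{sub}, and treat $q=2$, $r=6$ separately for the same five diagrams $A_5,B_3,C_3,D_4,G_2$. You diverge in two places, both to good effect. First, where the paper works with orders via the Levi decomposition $P_W=U_W\rtimes L_W$ (and must argue separately that the primitive prime cannot divide $|U_W|\cdot|U_{W^{\complement}}|$, with the centre denominators and torus factors left implicit), you work with the indices $[P_W:B]$, which by the Bruhat decomposition are the Poincar\'e polynomials $\prod_{i\in I(X_W)}(q^i-1)/(q-1)$ of the parabolic Weyl subgroups; this makes the unipotent and central factors disappear from the outset and is the cleaner bookkeeping. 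Second, in the exceptional case the paper applies Zsigmondy a second time, now to $2^5-1$, to force $\{L_W,L_{W^{\complement}}\}=\{A_1(2),A_4(2)\}$ for $A_5$ and then computes directly (asserting the same trick handles $B_3,C_3,D_4,G_2$), whereas you use $3$-adic and $7$-adic valuations via lifting-the-exponent. Your valuation computations check out: for $B_3,C_3,D_4,G_2$ the right-hand side has $v_3=n+1$ while the left-hand side has $v_3\leq n$; for $A_5$ the component-rank multisets with matching $v_3=4$ are exactly $\{3,1,1\}$ and $\{2,1,1,1\}$, each with a single component of rank $\geq 2$, hence $v_7=1\neq 2$. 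Both treatments are valid; the paper's secondary-Zsigmondy argument is shorter for $A_5$, while your $v_3$ count is more uniform across the other four diagrams and avoids any case analysis of which Levi types can occur.
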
 
\begin{proof} Let $X_n$ be the Dynkin diagram of $G$, and let $V$ denote the vertices of $X_n$. By Theorem \ref{levi}, there is $W \in \mathcal{P}(V) \setminus \{\emptyset, V\}$ such that $P = P_W$ and $P^{\complement} = P_{W^{\complement}}$. 

If $P^{\complement}$ is a group-complement, i.e. $PP^{\complement}=P^{\complement}P = G$, then by Product Formula: $$ |G| \cdot |B| = |P| \cdot |P^{\complement}| =  |L_W| \cdot |L_{W^{\complement}}| \cdot |U_W| \cdot |U_{W^{\complement}}|,$$
with $U_W$ and $L_W$ as for Section \ref{pre}.  Take $r = \max(I(X_n))$ and let $p$ be a primitive prime divisor of $q^r-1$ (coming from Theorem \ref{zig}). First $p$ cannot divide $|U_W| \cdot |U_{W^{\complement}}|$ because (by Theorem \ref{levi}) this last number is a power of the order $q$ of $\mathbb{F}_q$. So $p$ must divide $|L_W| \cdot |L_{W^{\complement}}|$ which (by Theorem \ref{levi}) is a product of order of Chevalley groups whose Dynkin diagrams are proper sub-diagrams of $X_n$. The contradiction follows from Theorem \ref{zig} and Lemma \ref{sub} (so that $P^{\complement}$ is not a group-complement), except when $q=2$ and $r=6$. 

In this last case, $X_n$ must be $A_5$, $B_3$, $C_3$, $D_4$ or $G_2$. We will explain how to rule out $A_5$ in a \emph{generic way} (the same argument works for the others). We know that $$2^{-15}|A_5(2)| = (2^2-1)(2^3-1)(2^4-1)(2^5-1)(2^6-1).$$ Now, $2^5-1$ has a primitive prime divisor. So we have $\{ L_W , L_{W^{\complement}} \} = \{A_1(2),A_4(2)\}$. But $$q^{-11}|A_1(2)| \cdot |A_4(2)| = (2^2-1)^2(2^3-1)(2^4-1)(2^5-1) = (2^4-1)(2^5-1)(2^6-1).$$  
It follows that $(2^2-1)(2^3-1)$ divides $\frac{|G| \cdot |B|}{|P| \cdot |P^{\complement}|}=1$, contradiction. 
\end{proof}


Note that there is a shorter and more \emph{illuminating} proof working for the classical groups. Let us sketch it for $G = GL(\mathbb{K}^n)$, it works as well in general. Let $B$ be a Borel subgroup of $G$. For any $P$ in the open interval $(B,G)$, $P$ is a parabolic subgroup and there is a subspace $V \subset \mathbb{K}^n$ such that $P=G_V$ (the stabilizer subgroup). But, for every subspace $W \subset \mathbb{K}^n$ with $\dim(W) = \dim(V)$, there is $g \in G$ such that $g(V) = W$. \emph{Assume} that $PP^{\complement} = P^{\complement}P= G$, then $g=ab$ with $a \in P^{\complement}$ and $b \in P$. But $W=g(V) = ab(V) = a(V)$ because $b \in P=G_V$. Conclusion, for every subspace $W \subset \mathbb{K}^n$ with $\dim(W) = \dim(V)$, there is $a \in P^{\complement}$ such that $a(V) = W$, so $P^{\complement} = G$, \emph{contradiction}.

Unfortunately the above argument does not work for the exceptional groups, that is why the current paper provides a \emph{uniform} case-by-case proof using Zsigmondy's theorem, working for every Chevalley group. For being even more general, let us ask the following: 

\begin{question} Is every Boolean interval $[H,G]$ of $\mathcal{L}(G)$, for every finite simple group $G$, strongly non group-complemented, up to finitely many exceptions?
\end{question}
\noindent To answer this question, we need to classify the finite simple groups $G$ having a group factorization $G = AB$ such that $[A \cap B, G]$ is Boolean; which reduces to a maximal factorization with no extra intermediate, in the sense that the open interval $(A \cap B, G)$ is equal to $\{A,B\}$. We obtained by GAP \cite{gap} the following list for $(G,A,B)$ when $|G| < 2 \cdot 10^6$: 
\begin{itemize}
\item $(A_6, \ A_5, \ A_5)$,
\item $(A_8, \ A_7, \ 2^3:A_1(7))$,  
\item $(M_{12}, \ M_{11}, \ M_{11})$, 
\item $(C_2(2^2), \ A_1(2^4):2, \ A_1(2^4):2)$,  
\item $ (C_3(2), \ A_8:2, \ ^2A_2(3^2):2)$.  
\end{itemize}
A full classification should be accessible by using \cite{LPS}.


\section{Acknowledgments} 
Thanks to the organizers of the conference AGTA 2019 (Lecce, Italy) where the authors met. The first author thanks the second for answering a question coming from a lack of understanding of a paragraph in a referee report (written in 2016 for a preprint of \cite{bp}). This short note is a redaction of this answer. 
\begin{bibdiv}
\begin{biblist}

\bib{bp}{article}{
   author={Balodi, Mamta},
   author={Palcoux, Sebastien},
   title={On Boolean intervals of finite groups},
   journal={J. Comb. Theory, Ser. A}
   volume={157}
   pages={49-69},
   date={2018},
   doi={10.1016/j.jcta.2018.02.004}, 
} 
\bib{car}{book}{
   author={Carter, Roger W.},
   title={Simple groups of Lie type},
   note={Pure and Applied Mathematics, Vol. 28},
   publisher={John Wiley \& Sons, London-New York-Sydney},
   date={1972},
   pages={viii+331},
}
\bib{gap}{misc}{
   author={The GAP~Group},
   title={GAP -- Groups, Algorithms, and Programming, Version 4.9.3},
   date={2018}, 
   note={\tt gap-system.org},    
}
\bib{LPS}{article}{
   author={Liebeck, Martin W.},
   author={Praeger, Cheryl E},
   author={Saxl, Jan},
   title={The maximal factorizations of the finite simple groups and their automorphism groups.},
   journal={Mem. Am. Math. Soc.},
   volume={432},
   date={1990},
   doi={10.1090/memo/0432}
}
\bib{zi}{article}{
   author={K. {Zsigmondy}},
   title={Zur Theorie der Potenzreste},
   journal={Monatsh. Math. Phys.}
   volume={3}
   pages={265-284},
   date={1892},
   doi={10.1007/BF01692444},
}

\end{biblist}
\end{bibdiv}
\end{document}